\theoremstyle{plain}
\newtheorem{theorem}{Theorem}[section]
\newtheorem{lemma}[theorem]{Lemma}
\newtheorem{assumption}[theorem]{Assumption}
\theoremstyle{definition}
 \def\address#1#2{\begingroup
 \noindent\parbox[t]{7.8cm}{%
 \small{\scshape\ignorespaces#1}\par\vskip0ex
 \noindent\small{\itshape E-mail}%
 \/: #2\par\vskip4ex}\hfill%
 \endgroup}%
\newcommand{\re}{\mathbb{R}}
\def\bra#1{\langle {#1} \rangle }
\def\r2n{\re^n\times (\re^n\!\setminus\!\{ 0\})}
\def\dbar{\mbox{\setbox0=\hbox{$d$}$d$\kern-.55\wd0\vbox{%
\hrule height.1ex width.75\wd0\kern1.3ex}}}
\date{}
\begin{document}
\title{Remark on Estimates in Modulation Spaces for Schr\"odinger Evolution
Operators with Sub-quadratic Potentials
}
\author{
Kosuzu Hamaoka, Keiichi Kato and Shun Takizawa
}
\maketitle

\begin{abstract}
In this paper we give an estimate for the solution to the
 Schr\"odinger equation with sub-quadratic potentials
 in modulation spaces by the norm of the initial functions in Wiener-Amalgum spaces. 
\end{abstract}

\section{Introduction}
In this paper, we shall give an estimate for the solution to the Schr\"odinger equation with time
 dependent potential:
    \begin{align}
    \begin{cases}
    i\partial_t u(t,x)=-\frac{1}{2}\Delta u(t,x)+V(t,x)u(t,x),  &(t,x)\in
    \re\times\re^n,\\
    u(0,x)=u_0(x), &x\in \re^n
    \end{cases}
   \label{SE}
    \end{align}
 in modulation spaces by the norm of the initial function in Wiener-Amalgum spaces. 
Here $i= \sqrt{-1}$,  $u(t,x)$ is a complex valued function of 
 $(t,x) \in {\mathbb R} \times {\mathbb R}^n$, $V(t,x)$ is a real valued
 function of $(t,x) \in {\mathbb R} \times {\mathbb R}^n$,
 $u_0(x)$ is a complex valued function of $x \in {\mathbb R}^n$,
 $\partial_t u= {\partial u}/{\partial t}$ and
 $\Delta u= \sum^{n}_{i=1} {\partial^2 u }/{ \partial x_i^2}$.

It is shown that the solutions in modulation spaces $M^{p,q}$ with $q\ge p$ are estimated by the Wiener-Amalgum spaces $W^{p,q}$ with the same index 
under the following condition on the potential $V$. 
\begin{assumption} 
\label{ass1-1}
 $V \in C^{\infty}({\mathbb R} \times {\mathbb R}^n)$ and there exists a positive number $\epsilon $ such that 
 for all multi-indices $\alpha$
 there exists $C_\alpha >0$ such that 
    \begin{align}
    \label{assumption_V}
    |\partial^\alpha_xV(t,x)|\le C_{\alpha}(1+|x|)^{2-\epsilon -|\alpha |},
    \quad (t,x)\in \re\times\re^n .
    \end{align}
\end{assumption}
There are a large number of works devoted to study the equation \eqref{SE}.
The works of 
 B\'{e}nyi-Gr\"{o}chenig-Okoudjou-Rogers \cite{Benyi et all},
 Wang-Hudzik \cite{Wang-Hudzik} and Wang-Zhao-Guo \cite{WZG} are the beginnings In the context of the modulation spaces $M^{p,q}$, and these 
 works have been developed by a number of authors using a large variety of 
 methods (see, for example,  B\'{e}nyi-Okoudjou \cite{Benyi-Okoudjou},
 Cordero-Nicola \cite{Cordero Nicola JFA 2008}, 
\cite{Cordero Nicola MN 2008}, Cordero-Nicola-Rodino \cite{Cordero Nicola Rodino}, 
 Kobayashi-Sugimoto \cite{KS}, Miyachi-Nicola-Rivetti-Tabacco-Tomita
 \cite{Miyachi-NicolaRivetti-Tabacco-Tomita}, Tomita \cite{Tomita}, 
 Wang-Huang \cite{Wang Huang}). 
The precise definition of modulation spaces and Wiener-amalgum spaces will be given in Section
 \ref{Preliminaries}, but the main idea of these spaces is to consider
 the space variable and the variable of its Fourier transform 
 simultaneously.

To state our results, we define the Schr\"odinger operator of a free
 particle  $e^{\frac{1}{2}it\Delta}$ by 
    $$(e^{\frac{1}{2}it\Delta}f)(x)=
    \mathcal{F}^{-1}_{\xi\to x}[e^{-\frac{1}{2}it|\xi|^2}
    \mathcal{F}f(\xi)](x), \quad f\in\mathcal{S}(\re^n).$$ 
Here we use the notation
 $\mathcal{F}f(\xi)=\int_{\re^n}f(x)e^{-ix\cdot\xi}dx$ for the Fourier
 transform of $f$ and 
 $\mathcal{F}^{-1}f(x)=\int_{\re^n} f(\xi)e^{ix\cdot\xi}\dbar\xi$ with
 $\dbar\xi =(2\pi)^{-n}d\xi$ for the inverse Fourier transform of $f$.
The following theorems are our main results.

%%%%%%%%%%%%%%%%%%%%%%%%%%%%%%%%%%%%%%%%%%%%%%%%%%%%%%%%%%%%%%%%%%%
%  MAIN THEOREM-1
%%%%%%%%%%%%%%%%%%%%%%%%%%%%%%%%%%%%%%%%%%%%%%%%%%%%%%%%%%%%%%%%%%%
\begin{theorem}
\label{main-estimate}
Suppose that $1\le p, q \le \infty$ satisfy 
\begin{equation}
 \begin{cases}
  p\le q \quad \text{for} \ p \ge n/\epsilon, \\
p \le q < \frac{np}{n-p\epsilon} \quad \text{for} \ p<n/\epsilon.
\end{cases}
\end{equation}
Let $\varphi_0\in\mathcal{S}(\re^n)\backslash\{ 0\}$ and 
set $\varphi (t,x)=e^{\frac{1}{2}it\Delta} \varphi_0(x)$.
If $V$ satisfies Assumption \ref{ass1-1}, 
then 
 there exist $T>0$ and $C>0$ such that 
    \begin{align*}
    \| u(t,\cdot) \|_{M^{p,q}_{\varphi (t,\cdot)}}
    \le C \|u_0\|_{W^{p,q}_{\varphi_0}},
    \quad u_0\in\mathcal{S}(\re^n)
    \end{align*}
 for all $t\in [-T,T]$, where $u(t,x)$ denotes the solution of
 \eqref{SE} in $C(\re ;\mathcal{S}'(\re^n))$
 with $u(0,x)=u_0(x)$.
\end{theorem}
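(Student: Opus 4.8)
The plan is to pass to the short-time Fourier transform (wave packet transform) and reduce the theorem to a single mixed-norm inequality, since both sides of the claimed estimate are expressed through it. Writing
\begin{align*}
\mathcal{V}_\varphi g(x,\xi)=\int_{\re^n} g(y)\,\overline{\varphi(y-x)}\,e^{-iy\cdot\xi}\,dy ,
\end{align*}
the modulation norm on the left is the $L^q_\xi L^p_x$ norm of $\mathcal{V}_{\varphi(t,\cdot)}u(t,\cdot)$, while the Wiener--Amalgam norm on the right is the $L^p_x L^q_\xi$ norm of $\mathcal{V}_{\varphi_0}u_0$. Thus it suffices to bound the first mixed norm of the transform of the solution by the second mixed norm of the transform of the data.

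The model case is the free evolution $V\equiv 0$. Here $e^{\frac12 it\Delta}$ is a metaplectic operator whose underlying canonical transformation is the free flow $(x,\xi)\mapsto(x+t\xi,\xi)$; since the window $\varphi(t,\cdot)=e^{\frac12 it\Delta}\varphi_0$ is transported by the \emph{same} flow, the covariance of the wave packet transform under metaplectic operators gives
\begin{align*}
\big|\mathcal{V}_{\varphi(t,\cdot)}\big(e^{\frac12 it\Delta}u_0\big)(x,\xi)\big|
=\big|\mathcal{V}_{\varphi_0}u_0(x-t\xi,\xi)\big| .
\end{align*}
The inner substitution $x\mapsto x+t\xi$ removes the shear, and then Minkowski's integral inequality---valid precisely because $p\le q$---exchanges the order of integration:
\begin{align*}
\Big\|\,\big\|\mathcal{V}_{\varphi_0}u_0(\cdot-t\xi,\xi)\big\|_{L^p_x}\Big\|_{L^q_\xi}
&=\Big\|\,\big\|\mathcal{V}_{\varphi_0}u_0(\cdot,\xi)\big\|_{L^p_x}\Big\|_{L^q_\xi}\\
&\le\Big\|\,\big\|\mathcal{V}_{\varphi_0}u_0(x,\cdot)\big\|_{L^q_\xi}\Big\|_{L^p_x} .
\end{align*}
This already settles the estimate when $V\equiv 0$, with no restriction on $t$: the hypothesis $p\le q$ is exactly what converts the modulation structure into the amalgam structure.

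For the full equation I would insert Duhamel's formula
\begin{align*}
u(t)=e^{\frac12 it\Delta}u_0-i\int_0^t e^{\frac12 i(t-s)\Delta}V(s)u(s)\,ds ,
\end{align*}
apply $\mathcal{V}_{\varphi(t,\cdot)}$, and use the covariance termwise, factoring $e^{\frac12 it\Delta}=e^{\frac12 i(t-s)\Delta}e^{\frac12 is\Delta}$ so as to transport both $\varphi(s,\cdot)$ and $V(s)u(s)$ along the free flow. This produces an integral equation for $\mathcal{V}_{\varphi(t,\cdot)}u(t,\cdot)$ whose inhomogeneous term is the free contribution already controlled, and whose kernel involves $\mathcal{V}_{\varphi(s,\cdot)}[V(s)u(s)]$ evaluated at $(x-(t-s)\xi,\xi)$. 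One then closes the estimate by a fixed-point (Gronwall) argument in the mixed-norm space over a short interval $[-T,T]$.

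The main obstacle is the potential term: one must relate $\mathcal{V}_{\varphi(s,\cdot)}[V(s)u(s)]$ back to $\mathcal{V}_{\varphi(s,\cdot)}u(s)$. Expanding $V(s,y)$ about the wave-packet center and invoking Assumption \ref{ass1-1}, the leading multiplier is harmless, while each derivative $\partial_x^\alpha V$ contributes a factor decaying like $(1+|x|)^{2-\epsilon-|\alpha|}$, so the correction operator is, up to acceptable remainders, a multiplier of order $2-\epsilon$ composed with the transform. Estimating it on $L^p_x L^q_\xi$ forces a Sobolev-type embedding in the $\xi$-variable, and this is exactly where the upper restriction $q<np/(n-p\epsilon)$ for $p<n/\epsilon$ is needed; for $p\ge n/\epsilon$ the embedding is automatic and only $p\le q$ is required. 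Choosing $T$ sufficiently small makes the resulting integral operator a contraction, which completes the argument.
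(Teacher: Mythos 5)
Your reduction and your treatment of the free case are correct and coincide with the paper's estimate of the main term: the representation $|W_{\varphi(t,\cdot)}(e^{\frac12 it\Delta}u_0)(x,\xi)|=|W_{\varphi_0}u_0(x-t\xi,\xi)|$ followed by Minkowski's integral inequality (valid since $p\le q$) is exactly how the paper converts the $M^{p,q}$ structure into the $W^{p,q}$ structure. The genuine gap is in your treatment of the potential. Your Duhamel scheme transports everything along the \emph{free} flow and treats the whole term $V(s)u(s)$ perturbatively, dismissing the leading Taylor terms of $V$ at the packet center as ``harmless.'' They are not: under Assumption \ref{ass1-1} the potential may grow like $(1+|x|)^{2-\epsilon}$, so the zeroth-order term contributes the unbounded multiplier $V(s,x)\,W_{\varphi(s,\cdot)}u(s,x,\xi)$, and the first-order term $\nabla_xV(s,x)\cdot(y-x)$ becomes, under the transform, a first-order operator in $\xi$ with coefficient of size $(1+|x|)^{1-\epsilon}$ --- a derivative loss with an unbounded coefficient. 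Neither is bounded by $C\|u(s)\|_{M^{p,q}}$, so your Gronwall/contraction loop does not close, and no choice of small $T$ repairs this: the obstruction lies in spatial growth, not in the length of the time interval. What actually renders these terms harmless is absorbing them \emph{exactly} rather than perturbatively: the paper (following \cite{K-K-I-4}) uses the exact formula \eqref{kainohyouji}, in which $V(s,x(s))$ and $\nabla_xV\cdot x(s)$ enter only through the unimodular phase $e^{-i\int_0^t h\,ds}$, and the transform is evaluated along the characteristics of the \emph{full} classical flow \eqref{ODE}; the only term left to iterate is the second-order Taylor remainder, whose coefficient $V_{jk}$ satisfies $|V_{jk}(t,x,y)|\le C\langle x\rangle^{-\epsilon}\langle y-x\rangle^{\epsilon}$ by sub-quadraticity.

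Two further discrepancies are worth flagging. First, replacing your explicit shear substitution by the nonlinear flow requires small-time Jacobian control of the maps $x\mapsto x(0;t,x,\xi)$ and $\xi\mapsto\xi(0;t,x,\xi)$ (Lemmas \ref{lemma-1} and \ref{lemma-2}); this is a necessary ingredient your outline has no analogue of, and it is where the restriction to $|t|\le T$ genuinely enters. Second, your explanation of the exponent restriction is misplaced: it does not come from a ``Sobolev-type embedding in the $\xi$-variable,'' but from the decay of $V_{jk}$ in the \emph{position} variable. In the paper one applies H\"older's inequality in $x$ with $\|\langle\cdot\rangle^{-\epsilon}\|_{L^r}<\infty$ (so $\epsilon r>n$) and $1/r+1/r'=1/p$, together with Young's inequality and a Minkowski exchange, ending in the $L^{r'}_X$ norm of the transform, which is then returned to $L^p_X$; the requirement that an admissible $r$ exist, i.e.\ that $1/q \ge 1/r' > 1/p-\epsilon/n$ be compatible, is precisely $q<\frac{np}{n-p\epsilon}$ when $p<n/\epsilon$, and is vacuous when $p\ge n/\epsilon$. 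So your numerology is right but your mechanism is not, and the central analytic idea of the proof --- the exact phase-space transport representation that eliminates the unbounded part of $V$ --- is missing from the proposal.
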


Cordero-Gr\"ochenig-Nicola-Rodino \cite{Cordero-Grochenig-Nicola-Rodino} and 
Kato-Ito-Kobayashi \cite{K-K-I-4} have shown that 
the solutions of Schr\"odinger equations with at most quadratic growth potentials in modulation spaces $M^{p.p}$ is estimated by the same norm of the initial data. On the other hand, if the potential $V$ has at most linear growth, Kato-Ito-Kobayashi \cite{K-K-I-4} has shown that the solutions  in modulation spaces $M^{p,q} $ is estimated 
by the same norm of the initial functions for any $p,q $ with $1\le p,q \le \infty$ .
Theorem \ref{main-estimate} shows that 
the solutions in modulation spaces $M^{p,q}$ can be estimated by 
the Wiener-amalgum space with the same parameter for short time, 
which is impossible if the potential has quadratic growth.

This paper is organized as follows.
In Section 2, we give some notation and recall the definitions and 
 basic properties of wave packet transform and modulation spaces.
In Section 3, we give some properties concerning the orbit of the classical
 mechanics corresponding to the Schr\"odinger equation \eqref{SE}.
In Section 4, we prove Theorem \ref{main-estimate}.

\section{Preliminaries}\label{Preliminaries}

\subsection{Notation}
For $x=(x_1,\cdots ,x_n)\in\re^n$, we denote 
%, $y=(y_1,\cdots ,y_n)\in\re^n$
% and a $m\times n$ matrix $A=(a_{ij})$, we denote 
$\bra{x}=(1+|x|^2)^{\frac{1}{2}}$.
%\quad
% x\cdot y=\sum_{j=1}^nx_jy_j,\quad 
%\|x\|_\infty =\underset{1\le j\le n}{\rm max}|x_j| \quad
%{\text{and}}\quad 
% \|A\|_\infty=\underset{1\le j\le m,1\le k\le n}{\rm max}|a_{jk}|.$$
%For $x=(x_1,\cdots ,x_n)\in\re^n$ and $y=(y_1,\cdots ,y_n)\in\re^n$, 
% we denote $\|x\|_\infty =\underset{1\le j\le n}{\rm max}|x_j|$ and 
%$x\cdot y=\sum_{j=1}^nx_jy_j$.
%For a $m\times n$ matrix $A=(a_{ij})$, we also denote 
% $\|A\|_\infty=\underset{1\le j\le m,1\le k\le n}{\rm max}|a_{jk}|$.
For a real valued function $V\in C^1(\re\times \re^n)$,
we put
$$\nabla_x V(t,x_1,\ldots ,x_n)=(\partial_{x_1}V(t,x_1,\ldots ,x_n)
 ,\ldots , \partial_{x_n}V(t,x_1,\ldots ,x_n) ).$$
Throughout this paper the letter $C$
denotes a constant, which may be different in each occasion.

\subsection{Wave Packet Transform}\label{Wave Packet Transform}
We recall the definition of the wave packet transform which is defined
 by C\'ordoba-Fefferman \cite{C-F-1}. 
Wave packet transform is called short time Fourier transform or windowed
 Fourier transform in several literatures.
Let $f \in {\mathcal S}^\prime({\mathbb R}^n)$ and 
 $\varphi \in {\mathcal S}({\mathbb R}^n)\backslash \{0\}$.
Then the wave packet transform $W_\varphi f(x,\xi)$ of $f$ with 
 the wave packet generated by a function $\varphi$ is defined by
    \begin{equation*}
    W_\varphi f(x,\xi) = \int_{{\mathbb R}^n} 
    \overline{ \varphi(y-x)} f(y) e^{-i y \cdot \xi} dy.
    \end{equation*} 
We call such $\varphi$ window function.
Let $F$ be a  function  on ${\mathbb R}^n \times {\mathbb R}^n$.
Then the (formal) adjoint operator $W^{*}_\varphi$ of $W_\varphi$ is
 defined by
    $$W^{*}_\varphi F(x)=\iint_{{\mathbb R}^{2n}} F(y,\xi)
    \varphi(x-y)  e^{ix \cdot \xi} dy \dbar\xi$$
with $\dbar \xi = (2 \pi)^{-n} d \xi$.
It is known that  for 
 $\varphi, \psi \in {\mathcal S}({\mathbb R}^n)\backslash\{ 0\}$
 satisfying %$\langle \psi,\varphi \rangle \not=0$,
$(\psi, \varphi)_{L^2}\not=0$
we have the inversion formula
    \begin{equation*}
    %\frac{1}{\langle \psi,\varphi \rangle} 
\frac{1}{(\psi, \varphi)_{L^2}}W_\psi^{*} W_\varphi f = f,
    \quad f \in {\mathcal S}^\prime({\mathbb R}^n) \label{inversion formula}
    \end{equation*}
 ([\ref{Grochenig}, Corollary 11.2.7]).

For the sake of convenience, we use the following notation
    \begin{align*}
    W_{\varphi(t,\cdot)}u(t,x,\xi)
    =\int_{\re^n}\overline{\varphi (t,y-x)}u(t,y)e^{-iy\cdot \xi}dy,
    \end{align*}
 where  
 $\varphi (t,x)$ and $u(t,x)$ are functions on  $\re\times\re^n$.

\subsection{Modulation Spaces and Wiener-Amalgum spaces}\label{MODULATION SPACES}
We recall the definitions of modulation spaces $M^{p,q}$ and Wiener-amalgum spaces $W^{p,q}$.
Let $1 \leq p,q \leq \infty$ and 
 $\varphi \in {\mathcal S}({\mathbb R}^n) \setminus \{ 0 \}$.
Then  the modulation spaces $M^{p,q}_\varphi({\mathbb R}^n)=M^{p,q}$
(the Wiener-amalgum spaces $W^{p,q}_\varphi({\mathbb R}^n)=W^{p,q}$)
 consists of all tempered distributions 
 $f \in {\mathcal S}^\prime({\mathbb R}^n)$ such that the norm
\begin{equation*}
 \| f \|_{M^{p,q}_\varphi} =
    \left( \int_{\re^n} \left( \int_{\re^n} |W_\varphi  f (x,\xi)|^p  dx  \right)^{q/p} d \xi  \right)^{1/q}
= \| W_\varphi f(x,\xi) \|_{L^p_x L^q_\xi}
\end{equation*}
\begin{equation*}
\left(\text{resp.} \quad
 \| f \|_{W^{p,q}_\varphi} = \left( \int_{{\mathbb R}^n} \left( \int_{{\mathbb R}^n} |W_\varphi  f (x,\xi)|^q  d\xi  \right)^{p/q} dx  \right)^{1/p}
= \| W_\varphi f(x,\xi) \|_{L^q_\xi L^p_x } 
\right)
\end{equation*}
 is finite(with usual modifications  if $p=\infty$ or $q=\infty$).

The space $M^{p,q}_\varphi({\mathbb R}^n)$ and $W^{p,q}_\varphi({\mathbb R}^n)$ are Banach spaces,
whose definitions are  independent of the choice of 
the window function $\varphi$, i.e.,
$M^{p,q}_\varphi ({\mathbb R}^n)= M^{p,q}_\psi({\mathbb R}^n)$ and $W^{p,q}_\varphi ({\mathbb R}^n)= W^{p,q}_\psi({\mathbb R}^n)$
for all $\varphi,\psi \in {\mathcal S}({\mathbb  R}^n) \setminus \{ 0 \}$
([\ref{Feichtinger}, Theorem 6.1]).
This property is crucial in the sequel,
since we choose a suitable window function $\varphi$
to estimate the modulation space norm.
If $1 \leq p,q < \infty$ then
${\mathcal S}({\mathbb R}^n)$ is dense
in $M^{p,q}$  and $W^{p,q}$([\ref{Feichtinger}, Theorem 6.1]).
We also note $L^2 =M^{2,2}=W^{2,2}$,
and $M^{p_1,q_1} \hookrightarrow M^{p_2,q_2}$
if $p_1 \leq p_2, q_1 \leq q_2$([\ref{Feichtinger}, Proposition 6.5]) and 
$W^{p,q} \hookrightarrow M^{p,q}$ ($M^{p,q} \hookrightarrow W^{p,q}$) if $q\ge p$ (resp. $p\ge q$). 

Let us define by ${\mathcal M}^{p,q}({\mathbb R}^n)$ and ${\mathcal W}^{p,q}({\mathbb R}^n)$
the completion of ${\mathcal S}({\mathbb R}^n)$
under the norm $\| \cdot \|_{M^{p,q}}$ and the norm $\| \cdot \|_{W^{p,q}}$ respectively. 
Then ${\mathcal M}^{p,q}({\mathbb R}^n) = M^{p,q}({\mathbb R}^n)$ and 
${\mathcal W}^{p,q}({\mathbb R}^n) = W^{p,q}({\mathbb R}^n)$
for $1 \leq p,q < \infty$.
%Moreover, the complex interpolation theory for these spaces reads as follows:
%Let $0< \theta <1$ and $1 \leq  p_i , q_i  \leq \infty, i = 1, 2$.
%Set
%$1/p=(1- \theta)/p_1+ \theta/p_2$, $
%1/q = (1- \theta)/q_1 + \theta /q_2$,
%then $({\mathcal M}^{p_1,q_1}, {\mathcal M}^{p_2,q_2})_{[\theta]} 
%={\mathcal M}^{p,q}$
%([\ref{Feichtinger}, Theorem 6.1], [\ref{Wang Huang}, Theorem 2.3]).
%We refer to  [\ref{Feichtinger}] and
% [\ref{Grochenig}] for more details.\\
%%%%%%%%%%%%%%%%%%%%%%%%%%%
%  SECTION 3
%%%%%%%%%%%%%%%%%%%%%%%%%%
\section{Key Lemmas}
The orbit of the classical mechanics corresponding to \eqref{SE}
 is described by the system of ordinary differential equations
    \begin{align}
    \label{ODE}
    \begin{cases}
    \dot{x}(s)=\xi (s),\\
    \dot{\xi} (s)=-(\nabla _x V)(s,x(s)),
    \end{cases}
    \end{align}
 where $x:\re\rightarrow \re^n$ and $\xi :\re\rightarrow \re^n$ (see also 
 Fujiwara \cite{Fujiwara}).
We denote $x(s;t,x,\xi ), \xi (s;t,x,\xi )$ by the pair of solutions to \eqref{ODE} 
with $x(t)=x, \xi (t)=\xi$. 
%%%%%%%%%%%%%%%%%%%%%%%%%%%%%%%%%%%%%%%%%%%%%%%%%%%%%%%%%%%%%%%%%%%%%%%%%%
% Lemma
%%%%%%%%%%%%%%%%%%%%%%%%%%%%%%%%%%%%%%%%%%%%%%%%%%%%%%%%%%%%%%%%%%%%%%%%%%
\begin{lemma}
\label{lemma-1}
Under Assumption \ref{ass1-1} on  $V(t,x)$, there 
exists $T>0$ such that 
$$
\frac{1}{2}\le\left|\frac{\partial (x(t;0,x,\xi ))}{\partial(x)}\right|
\le 2
$$
uniformly with respect to $(x,\xi )$ in $\re^n\times \re^n$ for $|t|\le T$, 
where $\frac{\partial (x(t;0,x,\xi ))}{\partial(x)}$ is 
the Jacobian of $x(t;0,x,\xi )$ as a function 
$x\mapsto x(t;0,x,\xi )$. 
\end{lemma}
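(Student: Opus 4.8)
The plan is to control the matrix of first derivatives of the flow with respect to the initial position through the variational equations, show that this matrix stays close to the identity uniformly in $(x,\xi)$ for short time, and then read off that its determinant lies in $[1/2,2]$. First I would write $X(s)=x(s;0,x,\xi)$ and $\Xi(s)=\xi(s;0,x,\xi)$ for the solution of \eqref{ODE} with $X(0)=x$, $\Xi(0)=\xi$, and note that these solutions exist for all $s$: by Assumption \ref{ass1-1} with $|\alpha|=1$ we have $|\nabla_xV(s,x)|\le C(1+|x|)^{1-\epsilon}$, so the Hamiltonian vector field $(\xi,-\nabla_xV)$ grows at most linearly in $(x,\xi)$ and Gronwall's inequality excludes blow-up. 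Since $V\in C^\infty$, the flow depends smoothly on the initial data, so the matrices $A(s)=\partial X(s)/\partial x$ and $B(s)=\partial\Xi(s)/\partial x$ are well defined.

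Differentiating \eqref{ODE} in $x$ (with $\xi$ held fixed) gives the variational system $\dot A(s)=B(s)$, $\dot B(s)=-H(s)A(s)$, with $A(0)=I$ and $B(0)=0$, where $H(s)$ denotes the Hessian matrix of $V(s,\cdot)$ evaluated at $X(s)$. The key point is that Assumption \ref{ass1-1} with $|\alpha|=2$ forces $|\partial_x^\alpha V(s,x)|\le C_\alpha(1+|x|)^{-\epsilon}\le C_\alpha$, so there is a constant $M$, independent of $s$, $x$ and $\xi$, with $\|H(s)\|\le M$ in operator norm. Converting the system to integral form yields
$$A(s)-I=-\int_0^s\int_0^\tau H(\sigma)A(\sigma)\,d\sigma\,d\tau .$$

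Setting $e(s)=\sup_{|\sigma|\le|s|}\|A(\sigma)-I\|$ and using $\|A(\sigma)\|\le 1+e(s)$, the integral identity gives $e(s)\le\tfrac{Ms^2}{2}\bigl(1+e(s)\bigr)$, hence $e(s)\le\tfrac{Ms^2/2}{1-Ms^2/2}\to0$ as $s\to0$; crucially this estimate is uniform in $(x,\xi)$ because $M$ is. Finally I would fix $\delta\in(0,1)$ with $(1+\delta)^n\le2$ and $(1-\delta)^n\ge1/2$, and choose $T>0$ so small that $e(T)\le\delta$. Since every eigenvalue $\lambda$ of $A(t)$ satisfies $|\lambda-1|\le\|A(t)-I\|\le\delta$, we obtain $|\det A(t)|=\prod_i|\lambda_i|\in[(1-\delta)^n,(1+\delta)^n]\subseteq[1/2,2]$ for all $|t|\le T$, uniformly in $(x,\xi)$, which is the claim. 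The main obstacle is ensuring the Gronwall estimate is genuinely uniform in the initial data; this is exactly where the at-most-quadratic growth enters, since it is what makes the Hessian of $V$ globally bounded.
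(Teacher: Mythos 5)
Your proposal is correct and follows essentially the same route as the paper: both differentiate the Duhamel form of the flow in $x$ to get the integral identity $A(t)-I=-\int_0^t(t-\tau)H(\tau)A(\tau)\,d\tau$ (your double integral is the same thing), exploit the uniform bound on the Hessian coming from Assumption \ref{ass1-1} with $|\alpha|=2$, and absorb the $O(t^2)$ term to make $\|A(t)-I\|$ uniformly small for $|t|\le T$. Your final step is in fact slightly more careful than the paper's, which passes from entrywise convergence $\partial x_j/\partial x_k\to\delta_{jk}$ to the Jacobian bound without comment, whereas your eigenvalue estimate $|\lambda-1|\le\|A(t)-I\|\le\delta$ gives $|\det A(t)|\in[(1-\delta)^n,(1+\delta)^n]$ explicitly.
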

\begin{proof}
We write $x(t), \xi(t)$ as $x(t;0,x,\xi ), \xi (t;0,x,\xi)$ for abreviation. 
Since $x(t), \xi(t)$ is the pair of the solutions to \eqref{ODE}, we have 
$$
x(t)= x + \xi t - \int_{0}^{t}(t-\tau )\nabla V(\tau, x(\tau ))d\tau, 
$$
from which we obtain 
\begin{equation}
\label{eq-lemma-4}
 \frac{\partial x_j(t)}{\partial x_k}= \delta_{jk} - \sum_{l=1}^{n}
\int_{0}^{t}(t-\tau )\partial_{x_l}\partial_{x_j} V(\tau, x(\tau )) 
\frac{\partial x_l(\tau )}{\partial x_k}
d\tau. 
\end{equation}
Since $\partial_{x_k}\partial_{x_l} V(\tau, x(\tau )) $ is bounded with respect to $(x,\xi )$, 
we have 
\begin{equation}
\label{eq-lemma-5}
 \sup_{|t|\le T} \left|\frac{\partial x_j(t)}{\partial x_k}\right|
\le \delta_{jk} +
Cn \left|\int_{0}^{t}|t-\tau|
\sup_{|t|\le T, 1\le l\le n}\left|\frac{\partial x_l(\tau )}{\partial x_k}\right|
d\tau\right|
\le 1 +Cnt^2\sup_{|\tau |\le T, 1\le l\le n}\left|\frac{\partial x_l(\tau )}{\partial x_k}\right|, 
\end{equation}
where $\delta_{jk}$ stands for the Kronecker delta, 
from which we get 
\begin{equation}
\label{eq-lemma-6}
 \sup_{|t|\le T, 1\le j,k\le n}\left|\frac{\partial x_j(\tau )}{\partial x_k}\right|
\le 2 
\end{equation}
for $|t|\le T$ with some $T>0$. 
Taking $t\to 0$ for the both sides of \eqref{eq-lemma-4}, we have
$$
\frac{\partial x_j(t)}{\partial x_k} \to \delta_{jk} 
$$ 
uniformly with repect to $(x,\xi )$ in $\re^n\times \re^n$. 
Hence we have 
$$
\frac{\partial (x(t;0,x,\xi ))}{\partial(x)}\to 1 
$$
as $t\to 0$ uniformly in $(x,\xi)$, which completes the proof. 
\end{proof}
Changing the variables $(x,\xi )\to (X, \xi)$ with $X = x(t;0,x,\xi)$, 
the variable $x$ is a function of $(X,\xi )$ satisfying 
\begin{equation}
 X = x(X,\xi ) +  \xi t -\int_{0}^{t}(t-\tau )\nabla V(\tau, x(\tau;0,x(X,\xi ) ,\xi))d\tau, 
\end{equation}
from which we have the following lemma. 
\begin{lemma}
\label{lemma-2}
Under Assumption \ref{ass1-1} on  $V(t,x)$, 
there exists $T>0$ such that 
$$
\frac{1}{2}\le\left|\frac{\partial (\xi (t;0,x(X,\xi),\xi ))}{\partial(\xi)}\right|
\le 2
$$
uniformly with respect to $(x,\xi )\in \re^n\times \re^n$ for $|t|\le T$. %with some $T>0$. 
\end{lemma}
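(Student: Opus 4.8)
The plan is to imitate the proof of Lemma~\ref{lemma-1}, differentiating this time in the momentum variable and working in the new coordinates $(X,\xi)$. Abbreviating $\bar x(\tau)=x(\tau;0,x(X,\xi),\xi)$, integration of the second equation in \eqref{ODE} from $0$ to $t$ gives $\xi(t;0,x(X,\xi),\xi)=\xi-\int_0^t\nabla V(\tau,\bar x(\tau))\,d\tau$, and differentiating in $\xi_k$ with $X$ held fixed yields
\[
\frac{\partial \xi_j(t;0,x(X,\xi),\xi)}{\partial \xi_k}
=\delta_{jk}-\sum_{l=1}^n\int_0^t \partial_{x_l}\partial_{x_j}V(\tau,\bar x(\tau))\,
\frac{\partial \bar x_l(\tau)}{\partial \xi_k}\,d\tau .
\]
As in Lemma~\ref{lemma-1}, the problem thus reduces to controlling the matrix $\partial \bar x_l(\tau)/\partial \xi_k$, the derivative now being taken with $X$ fixed.

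The one genuinely new point is the evaluation of this derivative in the new variables, and I would deal with it by removing the initial position $x(X,\xi)$. Subtracting the integral identity for $\bar x(\tau)$ from the one for $\bar x(t)=X$ displayed just before the statement eliminates $x(X,\xi)$ and gives
\[
\bar x(\tau)=X+(\tau-t)\xi
+\int_0^t(t-\sigma)\nabla V(\sigma,\bar x(\sigma))\,d\sigma
-\int_0^\tau(\tau-\sigma)\nabla V(\sigma,\bar x(\sigma))\,d\sigma ,
\]
in which $X$ enters explicitly. Writing $A_{jk}(\tau)=\partial \bar x_j(\tau)/\partial \xi_k$ and differentiating, the inhomogeneous term is $(\tau-t)\delta_{jk}$, which is $O(|t|)$ on the relevant interval, while the two integral terms are handled exactly as in \eqref{eq-lemma-5} using that $\partial_{x_l}\partial_{x_j}V$ is bounded by the $|\alpha|=2$ case of Assumption~\ref{ass1-1}. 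A bootstrap identical to \eqref{eq-lemma-5}--\eqref{eq-lemma-6} then gives a uniform bound $\sup_{|\tau|\le|t|}|A_{jk}(\tau)|\le C|t|$ for $|t|\le T$, valid uniformly in $(x,\xi)\in\re^n\times\re^n$; this is the step I expect to require the most care, since one must check that the elimination of $x(X,\xi)$ is legitimate on the whole time interval and that every constant is independent of $(x,\xi)$.

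Feeding the bound $A_{lk}(\tau)=O(|t|)$ back into the first display makes the integral term $O(t^2)$, so that $\partial \xi_j(t;0,x(X,\xi),\xi)/\partial \xi_k\to\delta_{jk}$ as $t\to 0$, uniformly with respect to $(x,\xi)$. Hence the Jacobian $\partial(\xi(t;0,x(X,\xi),\xi))/\partial(\xi)$ tends to $1$ uniformly, and choosing $T>0$ sufficiently small gives $\tfrac12\le|\partial(\xi(t;0,x(X,\xi),\xi))/\partial(\xi)|\le 2$ for all $|t|\le T$, which is the assertion.
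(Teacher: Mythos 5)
Your proposal is correct and follows essentially the same route as the paper, which proves Lemma \ref{lemma-2} simply by asserting that the argument of Lemma \ref{lemma-1} carries over: you differentiate the integrated form of \eqref{ODE} and close a bootstrap as in \eqref{eq-lemma-5}--\eqref{eq-lemma-6} to get the Jacobian entries converging to $\delta_{jk}$ uniformly in $(x,\xi)$. Your subtraction trick eliminating $x(X,\xi)$, so that the derivative at fixed $X$ never requires estimating $\partial x(X,\xi)/\partial \xi$ directly, is a clean and legitimate way to supply the one detail the paper leaves implicit.
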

%We note that $\frac{\partial x(X,\xi)}{\partial \xi}$ is bounded by the same argument as in the proof of Lemma \ref{lemma-1}.
The proof of Lemma \ref{lemma-2} can be done by the same argument as in the proof of Lemma \ref{lemma-1}. 
%%%%%%%%%%%%%
%  PROOF
%%%%%%%%%%%%%

\section{Proof of Theorem \ref{main-estimate}}
For the proof, we use the transformed equation of \eqref{SE} by wave packet transform, 
which is obtained in \cite{K-K-I-4}. 
Transforming the initial value problem \eqref{SE} by the wave packet transform 
with $\varphi(t,x)=e^{\frac{i}{2}t\triangle}\varphi_0$ as a basic wave packet, we have 
    \begin{multline}
    \label{kainohyouji}
    W_{\varphi (t,\cdot)}u(t,x,\xi)
    =e^{-i\int_0^t h(s;t,x,\xi)ds}
    \bigg( W_{\varphi_0} u_0(x(0;t,x,\xi),\xi(0;t,x,\xi))\\
    -i\int_0^t e^{i\int_0^\tau h(s;t,x,\xi)ds}Ru(\tau ,x(\tau;t,x,\xi), \xi
     (\tau;t,x,\xi)) d\tau\bigg),
    \end{multline}
where $h(s;t,x,\xi)=\frac{1}{2}|\xi (s;t,x,\xi)|^2+
    V(s,x(s;t,x,\xi))-\nabla_x V(s,x(s;t,x,\xi))\cdot x(s;t,x,\xi)$ and 
    \begin{multline*}
    Ru(t,x,\xi)
    =
    \frac{1}{\|\varphi(t,\cdot)\|^2_{L^2}}
    \sum_{j,k=1}^n \iiint_{\re^{3n}} \varphi_{jk}(t,y-x) V_{jk}
    (t,x,y) \varphi(t,y-z) \\
    \times W_{\varphi(t,\cdot)}u(t,z,\eta) 
    e^{iy\cdot(\eta -\xi)} dz\dbar\eta dy,
    \end{multline*}
 where $\varphi_{jk}(t,y)=y_j y_k \overline{\varphi (t,y)}$ and 
    $
    V_{jk}(t,x,y)=\int_0^1 \partial_{x_j}\partial_{x_k}
    V(t,x+\theta(y-x))(1-\theta)d\theta 
    $. 
\begin{proof}[Proof of Theorem \ref{main-estimate}]
 We only show in the case that $t>0$.
 Taking $L^p$ norm with respect to $x$ and $L^q$ norm with respect to $\xi$ of the 
 both sides of \eqref{kainohyouji}, 
 we have 
 \begin{multline}
\label{trans-eq}
 \Vert u(t,\cdot )\Vert_{M^{p, q}_\varphi(t)}\\
 \le 
 \Vert W_{\varphi_0} u_0(x(0;t,x,\xi),\xi(0;t,x,\xi))\Vert_{L^p_xL^q_\xi} 
 + \int_0^t \Vert Ru(\tau ,x(\tau;t,x,\xi), \xi(\tau;t,x,\xi))\Vert_{L^p_xL^q_\xi}  d\tau
 \end{multline}
Next we estimate the first term of the right hand side of \eqref{trans-eq}. 
Changing variable $x$ to $X = x(0;t,x,\xi )$, we have with Lemma \ref{lemma-1}
and Minkowski's inequality 
\begin{align*}
& \Vert W_{\varphi_0} u_0(x(0;t,x,\xi),\xi(0;t,x,\xi))\Vert_{L^p_xL^q_\xi} \\
&=  \Vert \Vert W_{\varphi_0} u_0(x(0;t,x,\xi),\xi(0;t,x,\xi))\Vert_{L^p_x}\Vert_{L^q_\xi} \\
&= \Vert \Vert W_{\varphi_0} u_0(X,\xi(0;t,x,\xi))\frac{\partial (x)}{\partial (X)}
\Vert_{L^p_x}\Vert_{L^q_\xi} \\
&\le C \Vert \Vert W_{\varphi_0} u_0(X,\xi(0;t,x,\xi))
\Vert_{L^p_X}\Vert_{L^q_\xi}\\
&= C \left\Vert 
\int\left| W_{\varphi_0} u_0(X,\xi(0;t,x,\xi))\right|^pdX
\right\Vert^{1/p}_{L^{q/p}_\xi}\\
&\le C
\left(
\int\left\Vert  |W_{\varphi_0} u_0(X,\xi(0;t,x,\xi))|^p\right\Vert_{L^{q/p}_\xi}dX
\right)^{1/p}\\
& = C \Vert \Vert W_{\varphi_0} u_0(X,\xi(0;t,x,\xi))
\Vert_{L^q_\xi}\Vert_{L^p_X}. 
\end{align*}
Changing variable $\xi(0;t,x(X,\xi),\xi)$ to $\Xi$ with Lemma \ref{lemma-2}, 
we have 
\begin{align}
 & \Vert W_{\varphi_0} u_0(x(0;t,x,\xi),\xi(0;t,x,\xi))\Vert_{L^p_xL^q_\xi} \nonumber\\
& = C \left\Vert \left(\int |W_{\varphi_0} u_0(X,\xi(0;t,x(X,\xi),\xi))|^q d\Xi \right)^{1/q}
\right\Vert_{L^p_X}\nonumber\\
& = C \left\Vert \left(\int |W_{\varphi_0} u_0(X,\Xi )|^q \frac{\partial \xi(0;t,x(X,\xi),\xi)}{\partial \Xi }d\Xi \right)^{1/q}
\right\Vert_{L^p_X}\nonumber\\
& \le C \left\Vert \left(\int |W_{\varphi_0} u_0(X,\Xi )|^q d\Xi \right)^{1/q}
\right\Vert_{L^p_X}\nonumber\\
& = C \Vert  u_0 \Vert_{W^{p,q}_{\varphi_0}}. \label{eq-13}
\end{align}
Thirdly we estimate the second term of the right hand side of \eqref{trans-eq}. 
The same argument as in the second part of this proof yields 
\begin{align}
& \Vert Ru(\tau ,x(\tau;t,x,\xi), \xi(\tau;t,x,\xi))\Vert_{L^p_xL^q_\xi} \\
& \le C \Vert Ru(\tau ,X, \xi(\tau;t,x,\xi))\Vert_{L^q_\xi L^p_X} \\
& \le C \Vert Ru(\tau ,X, \Xi )\Vert_{L^q_\Xi L^p_X}. 
\end{align}
The fact that $(1-\triangle_\xi )e^{i\xi(y-x)}=(1+|y-x|^2)e^{i\xi(y-x)}$ and integration by parts yield that 
\begin{align}
& {\lVert{ \lVert Ru(\tau,x,\xi ) \lVert}_{L_\xi^q}\lVert}_ {L_x^p}   \nonumber \\
&\leq \frac{1}{{\lVert \varphi(\tau,\cdot) \lVert}_{L^2}^2} \sum_{j,k=1}^{n}  \bigg{\lVert} \bigg{ \lVert}  \int \int \int | {(1-\Delta )}^{N}\varphi_{jk}(\tau,y-x) \nonumber   \\
&\phantom{aaaaaaaaaaaaaaa} \times V_{jk}(\tau,x,y) \nonumber  \varphi(\tau,y-z)| \frac{|W_{\varphi(\tau,\cdot)}u(\tau,z,\eta)|}{{\langle \eta-\xi \rangle}^{2N}} dz d\bar{\eta} dy\ \bigg{\Vert}_ {L_x^p}  \bigg{\lVert}_{L_\xi^q} \nonumber \\
 &\leq C \frac{1}{{\lVert \varphi(\tau,\cdot) \lVert}_{L^2}^2} \sum_{j,k=1}^{n} \sum_{|\beta_1|+|\beta_2|+|\beta_3|\leq2N}  \bigg{\lVert}  \bigg{\lVert}  \int \int \int |\partial_y^{{\beta}_1} \varphi_{jk}(\tau,y-x)\\
 &\phantom{aaaaaaaaaaaaaaa} \times \partial_y^{{\beta}_2} V_{jk}(\tau,x,y) \nonumber  \partial_y^{\beta_3}\varphi(\tau,y-z)| \frac{|W_{\varphi(\tau,\cdot)}u(\tau,z,\eta)|}{{\langle \eta-\xi\rangle}^{2N}} dz d\bar{\eta} dy\bigg{\lVert}_ {L_x^p}  \bigg{\lVert}_{L_\xi^q}. \label{170724_23Feb24} 
\end{align}
Since 
$(1+|A|)^\epsilon \le (1+|A+B|)^\epsilon (1+|B|)^\epsilon$, we have 
$(1+|A+B|)^{-\epsilon} \le (1+|A|)^{-\epsilon} (1+|B|)^\epsilon$, 
from which we have
\begin{align*}
 |V_{jk}(t,x,y)|&\le 
\int_0^1 |\partial_{x_j}\partial_{x_k}V(t,x+\theta(y-x))(1-\theta)|d\theta \\
&\le \int_0^1 C (1+|x+\theta(y-x)| )^{-\epsilon}d\theta \\
&\le  C (1+|x| )^{-\epsilon}(1+|y-x|)^\epsilon .
\end{align*}
The above inequality and \eqref{170724_23Feb24} show that 
\begin{align}
& {\lVert{ \lVert Ru(\tau,x,\xi ) \lVert}_{L_\xi^q}\lVert}_ {L_x^p}   \nonumber \\
&\leq C \frac{1}{{\lVert \varphi(\tau,\cdot) \lVert}_{L^2}^2} \sum_{j,k=1}^n \sum_{|\beta_1|+|\beta_2|+|\beta_3|\leq2N} \bigg{\lVert} \bigg{\lVert}  \int \int \int {\langle y-x \rangle}^{{\epsilon}}|\partial_y^{{\beta}_1} \varphi_{jk}(\tau,y-x)|\nonumber\\
&\phantom{aaaaa} \times
| \partial_y^{\beta_3}\varphi(\tau,y-z)| \frac{|W_{\varphi(\tau,\cdot)}u(\tau,z,\eta)|}{{\langle \eta-\xi\rangle}^{2N}} {\langle x \rangle}^{{-\epsilon}}dz d\bar{\eta} dy\bigg{\lVert}_ {L_x^p}  \bigg{\lVert}_{L_\xi^q}. 
\end{align}
Hausdorff-Young's inequality, H\"older's inequality and Minkowski's inequality
 show with $r>1, 1/r+1/r'=1/p, \epsilon r>n$ 
that
\begin{align*}
& {\lVert{ \lVert Ru(\tau,x,\xi ) \lVert}_{L_\xi^q}\lVert}_ {L_x^p}  \nonumber \\
&\lesssim \sum_{j,k=1}^n \sum_{|\beta_1|+|\beta_2|+|\beta_3|\leq2N}   {\lVert {\langle \cdot \rangle}^{-\epsilon} \lVert}_{L^r} 
\lVert \langle \cdot \rangle^{\epsilon} \partial^{\beta_1}\varphi_{j,k} \lVert_{L^1} \lVert \partial^{\beta_2}\varphi \lVert_{L^1}
\lVert \langle \cdot \rangle^{-2N} \lVert_{L^1 }
{\lVert{ \lVert W_{\varphi(\tau,\cdot)}u(X,\Xi)  \lVert}_ {L_\Xi^q}  \lVert}_{L_X^{r{\prime}}} \\
&\vphantom{\sum}\leq \sum_{j,k=1}^n \sum_{|\beta_1|+|\beta_2|+|\beta_3|\leq2N}   {\lVert {\langle \cdot \rangle}^{-\epsilon} \lVert}_{L^r} \lVert \langle \cdot \rangle^{\epsilon} \partial^{\beta_1} \varphi_{j,k} \lVert_{L^1} \lVert \partial^{\beta_2}\varphi \lVert_{L^1}
\lVert \langle \cdot \rangle^{-2N} \lVert_{L^1 }
{\lVert{ \lVert  W_{\varphi(\tau,\cdot)}u(\tau,\cdot)  \lVert}_ {L_X^{r^{\prime}}}  \lVert}_{L_\Xi^q}, 
\end{align*}
which shows that 
\begin{equation}
\label{eq-18}
 {\lVert{ \lVert Ru(\tau,x(\tau ;t,x,\xi ),\xi(\tau ;t,x,\xi) ) \lVert}_ {L_x^p}  \lVert}_{L_\xi^q} 
\le C{\lVert{ \lVert  W_{\varphi(\tau,\cdot)}u(\tau,\cdot)  \lVert}_ {L_X^{r'}}  \lVert}_{L_\Xi^q} 
\le C' {\lVert{ \lVert  W_{\varphi(\tau,\cdot)}u(\tau,\cdot)  \lVert}_ {L_X^p}  \lVert}_{L_\Xi^q} .
\end{equation}
From \eqref{eq-13} and \eqref{eq-18}, we have 
 \begin{equation*}
 \Vert u(t,\cdot )\Vert_{M_{\varphi(t)}^{p,q}}
 \le C_1 \Vert W_{\varphi_0} u_0(x,\Xi )\Vert_{W^{p,q}_{\varphi_0}} 
 + C_2 \int_0^t \Vert u(\tau ,\cdot )\Vert_{M_{\varphi(\tau )}^{p,q}}  d\tau, 
 \end{equation*}
which and Gronwall inequality complete the proof. 
\end{proof}

%%%%%%%%%%%%%%%%%%%%%%%%%%%%%%%%%%%%%%%%%%%%%%%%%%%%%%%%%%%%%%%%%%%
%  PROOF OF THEOREM
%%%%%%%%%%%%%%%%%%%%%%%%%%%%%%%%%%%%%%%%%%%%%%%%%%%%%%%%%%%%%%%%%%%

\address{
Kosuzu Hamaoka\\
Department of Mathematics,\\
 Faculty of Science, \\
Tokyo University of Science,\\
Kagurazaka 1-3, Shinjuku-ku, \\
Tokyo 162-8601, Japan}
{1122520@ed.tus.ac.jp}

\address{
Keiichi Kato\\
Department of Mathematics,\\
 Faculty of Science, \\
Tokyo University of Science,\\
Kagurazaka 1-3, Shinjuku-ku, \\
Tokyo 162-8601, Japan}
{kato@ma.kagu.tus.ac.jp}

\address{
Shun Takizawa\\
Department of Mathematics,\\
 Faculty of Science, \\
Tokyo University of Science,\\
Kagurazaka 1-3, Shinjuku-ku, \\
Tokyo 162-8601, Japan}
{1123703@ed.tus.ac.jp}

%\address{
%Masaharu Kobayashi \\
%Department of Mathematical Sciences,\\
%Faculty of Science, \\
%Yamagata University,\\
%Kojirakawa 1-4-12, Yamagata-City,
%Yamagata 990-8560,
%Japan}
%{kobayashi@sci.kj.yamagata-u.ac.jp }

%\address{
%Shingo Ito,\\ 
%Department of Mathematics, \\
%Faculty of Science, \\
%Tokyo University of Science,\\
%Kagurazaka 1-3, Shinjuku-ku, Tokyo 162-8601, Japan
%}
%{ito@ma.kagu.tus.ac.jp}

\end{document}